\theoremstyle{plain}
\newtheorem{lemma}[equation]{Lemma}
\newtheorem{proposition}[equation]{Proposition}
\newtheorem{theorem}[equation]{Theorem}
\theoremstyle{definition}
\newtheorem{definition}[equation]{Definition}
\newtheorem{example}[equation]{Example}
\newtheorem{remark}[equation]{Remark}
\newtheorem{notation}[equation]{Notation}
\numberwithin{equation}{section}
\newcommand{\Ai}{{\mathrm{A}_\infty}}
\newcommand{\End}{{\mathrm{End}}}
\newcommand{\F}{{\mathbb{F}}}
\newcommand{\Hom}{{\mathrm{Hom}}}
\newcommand{\Ob}{{\mathrm{Ob}}}
\newcommand{\Stab}{{\mathrm{Stab}}}
\newcommand{\kk}{{\mathbf{k}}}
\newcommand{\Mod}{{\mathrm{Mod}}}
\newcommand{\OO}{{\mathscr{O}}}
\newcommand{\Ind}{{\mathrm{Ind}}}
\newcommand{\Res}{{\mathrm{Res}}}
\newcommand{\Z}{{\mathbb{Z}}}
\title{Noetherian property of infinite EI categories}
\author{Wee Liang Gan} 
\address{Department of Mathematics, University of California, Riverside, CA 92521, USA.}
\email{wlgan@math.ucr.edu}
\author{Liping Li}
\address{Department of Mathematics, University of California, Riverside, CA 92521, USA.}
\email{lipingli@math.ucr.edu}
\begin{document}

\begin{abstract}
It is known that finitely generated FI-modules 
over a field of characteristic 0 are Noetherian. 
We generalize this result to the abstract setting 
of an infinite EI category satisfying certain combinatorial conditions.
\end{abstract}

\maketitle

\section{Introduction}

Let FI be the category whose objects are finite sets and morphisms are injections. 
If an object of FI is a set with $i$ elements, then its automorphism group
is isomorphic to the symmetric group $S_i$; thus, a module of the category 
FI gives rise to a sequence of representations  $V_i$ of $S_i$. 
Our present paper is motivated by the theory of FI-modules developed by 
Church, Ellenberg, and Farb in \cite{CEF} in order to study stability phenomenon 
in sequences of representations of the symmetric groups; see also \cite{F}.
The following theorem plays a fundamental role in the theory of FI-modules in \cite{CEF}.

\begin{theorem} \label{fi}
Any finitely generated $\mathrm{FI}$-module over a field $\kk$ of 
characteristic 0 is Noetherian. 
\end{theorem}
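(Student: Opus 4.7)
The plan is to reduce the problem to showing that the projective generators $M(m) := \kk[\Hom_{\mathrm{FI}}(m, -)]$ are Noetherian, and then to analyze their $S_n$-isotypic structure in characteristic zero. First I would set up the category of FI-modules and note that it is abelian with $\{M(m)\}_{m \geq 0}$ a family of projective generators. Any finitely generated FI-module $V$ admits a surjection from some finite direct sum $\bigoplus_{i=1}^{k} M(m_i)$; since every submodule of $V$ lifts to a submodule of this cover, it suffices to prove each $M(m)$ is Noetherian.

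To establish Noetherianity of $M(m)$, I would pass to the enlarged category $\mathrm{FI}\#$ whose morphisms are partial injections. The module $M(m)$ extends canonically to an $\mathrm{FI}\#$-module, and the key structural fact is that in characteristic zero every finitely generated $\mathrm{FI}\#$-module decomposes as a finite direct sum of modules of the form $M(W) := M(m') \otimes_{\kk S_{m'}} W$ for finite-dimensional $S_{m'}$-representations $W$. At level $n$ one has $M(W)(n) \cong \Ind_{S_{m'} \times S_{n-m'}}^{S_n}(W \boxtimes \mathbf{1})$, so by Pieri's rule the multiplicity of each irreducible $S_n$-representation $V_\lambda$ appearing in $M(W)(n)$ is bounded uniformly in $n$.

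The main obstacle is that an arbitrary FI-submodule of $M(W)$ need not be closed under the full $\mathrm{FI}\#$-structure, so one cannot directly invoke the decomposition. My plan is to introduce a weight filtration (indexing by the largest partition $\lambda$ whose isotype $V_\lambda$ appears) and argue by induction on this weight: the combination of bounded isotypic multiplicities, the rigidity forced by the FI-transition maps, and the semisimplicity of $\kk S_n$ in characteristic zero should force any ascending chain of FI-submodules to stabilize in finitely many steps. Characteristic zero is essential throughout, both to invoke semisimplicity and to have Pieri's rule available in its clean form; removing that hypothesis would require substantially different techniques and is precisely the point the present paper seeks to generalize.
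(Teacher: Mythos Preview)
Your reduction to the Noetherianity of $M(m)$ is the same as the paper's (Lemma~\ref{fg} and the short exact sequence argument at the end of Section~\ref{proofs}). From that point on, however, you follow a genuinely different route---essentially the original Church--Ellenberg--Farb strategy---while the paper deliberately avoids it.

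You invoke the $\mathrm{FI}\#$-structure, Pieri's rule, and isotypic multiplicity bounds for $S_n$-representations. The paper, by contrast, never mentions Young diagrams or branching rules; it extracts from the FI situation only two combinatorial features (the \emph{transitivity} and \emph{bijectivity} conditions of Section~\ref{statement}) and proves Noetherianity of $M(i)$ by tracking the finite-dimensional spaces $F_j(X)=\Hom_{\kk G_j}(M(i)_j,X_j)$. The bijectivity condition gives linear isomorphisms $\nu_{i,j}:\End_{\kk G_j}(M(i)_j)\to\End_{\kk G_{j+1}}(M(i)_{j+1})$ compatible with the inclusions $F_j(X)\hookrightarrow\End_{\kk G_j}(M(i)_j)$, so all $F_j(X)$ sit inside a space of fixed dimension; an infinite strictly ascending chain of submodules would force these dimensions to grow without bound (Proposition~\ref{submodule-of-M(i)}). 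What your approach buys is finer structural information (explicit isotypic decompositions, representation stability); what the paper's approach buys is portability---the same argument handles $\mathrm{FI}_\Gamma$, $\mathrm{VI}$, and $\mathrm{VIC}$, where the automorphism groups are not symmetric groups and Pieri's rule is unavailable.

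One caution: the step you flag as the ``main obstacle'' is indeed the heart of the matter, and your proposed resolution (weight filtration plus induction) is still only a slogan. The precise mechanism in the CEF argument is that the multiplicity of each stable irreducible $V(\lambda)_n$ in a submodule is non-decreasing in $n$ and bounded above, hence eventually constant; combined with the finiteness of the set of $\lambda$ that can occur, this forces stabilization. You should make that monotonicity explicit rather than appeal to unspecified ``rigidity forced by the FI-transition maps.''
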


Theorem \ref{fi} was proved by Church, Ellenberg, and Farb 
in \cite[Theorem 1.3]{CEF}. Subsequently, a generalization of Theorem \ref{fi}
to any Noetherian ring $\kk$ was given by Church, Ellenberg, Farb, and Nagpal  
in \cite[Theorem A]{CEFN}.
Using their result, Wilson \cite[Theorem 4.21]{Wi} deduced 
analogous theorems for the categories FI$_{BC}$ and FI$_D$ associated 
to the Weyl groups of type $B/C$ and $D$, respectively.
The categories FI, FI$_{BC}$, and FI$_D$ 
are examples of \emph{infinite} EI categories,
in the sense of the following definition.

\begin{definition}
An \emph{EI category} is a small category in which every endomorphism is an 
isomorphism. An EI category is said to be \emph{finite} (resp. \emph{infinite}) 
if its set of morphisms is finite (resp. infinite).
\end{definition}

L\"uck \cite[Lemma 16.10]{L} proved a version of Theorem \ref{fi} for finite EI categories 
when $\kk$ is any Noetherian ring. However, there is no such general result for
EI categories which are infinite. The goal of our present paper is to find
general sufficiency conditions on an infinite EI category so that the analog of Theorem \ref{fi} holds.
After recalling some basic facts on EI categories in Section  \ref{generalities},
we state our main result in Section  \ref{statement} and give the proof in Section \ref{proofs}.
We make some further remarks in Section \ref{further-remarks}.

Our main result (Theorem \ref{maintheorem}) gives a generalization 
of Theorem \ref{fi} to
the abstract setting of an infinite EI category satisfying certain 
simple combinatorial conditions.
It is applicable to the categories FI, FI$_{BC}$, and FI$_D$.
It is also applicable to the category VI
of finite dimensional $\F_q$-vector spaces and linear injections, where $\F_q$
denotes the finite field with $q$ elements.

In contrast to \cite{CEF}, the symmetric groups do not play 
any special role in our paper, and we do not use any results  from the
representation theory of symmetric groups in our proofs.
We hope to extend the theory of FI-modules and representation stability 
to our framework.
It should also be interesting to find conditions under which our main result
extends to an arbitrary Noetherian ring $\kk$.
The proof of \cite[Theorem A]{CEFN} makes use of \cite[Proposition 2.12]{CEFN}
which does not hold when the category FI is replaced by the category VI.

\begin{remark}
A generalization of Theorem \ref{fi} was proved 
by Snowden \cite[Theorem 2.3]{Sn}
in the language of twisted commutative algebras;
see \cite[Proposition 1.3.5]{SS1}.
His result has little overlap with our Theorem \ref{maintheorem}. 
Indeed, to interpret the category of modules of an EI category $C$ of
type $\Ai$ (in the sense of Definition \ref{A-infinity})
as the category of modules of a twisted commutative algebra
finitely generated in order 1,
the automorphism group of any object $i$ of $C$ must 
necessarily be the symmetric group $S_i$.
\end{remark}

\begin{remark}
After this paper was written, we were informed by  
Putman and Sam that they have a very recent preprint \cite{PS}
which proved the analog of Theorem \ref{fi} when $\kk$ is any Noetherian ring 
for several examples of {\em linear-algebraic} type categories 
such as the category VI. 
We were also informed by Sam and Snowden that they have
a very recent preprint \cite{SS2} which gives combinatorial criteria for 
representations of categories (not necessarily EI) to be Noetherian.
Their combinatorial criteria are very different from our conditions; 
in particular, it does not seem that Theorem \ref{maintheorem} will follow
from their results.
\end{remark}

\section{Generalities on EI categories}  \label{generalities}

Let $C$ be an EI category.
We shall assume throughout this paper that $C$ is skeletal.

\subsection{Quiver underlying an EI category}
We denote by $\Ob(C)$ the set of objects of $C$.
For any $i,j \in \Ob(C)$, we write $C(i,j)$ for the set of morphisms 
from $i$ to $j$. 

\begin{definition} 
A morphism $\alpha$ of $C$ is called 
\emph{unfactorizable} if:
\begin{itemize}
\item
 $\alpha$ is not an isomorphism;

\item
whenever $\alpha=\beta_1\beta_2$ where $\beta_1$ and $\beta_2$ 
are morphisms of $C$, either $\beta_1$ or $\beta_2$ is an isomorphism.
\end{itemize}
\end{definition}

We define a quiver $Q$ associated to $C$ as follows. The set of vertices of $Q$ is $\Ob(C)$.
The number of arrows from a vertex $i$ to a vertex $j$ is 
1 if there exists an unfactorizable morphism from $i$ to $j$; 
it is 0 otherwise. We call $Q$ the quiver underlying
the EI category $C$. 

\begin{definition}  \label{A-infinity}
Let $\Z_+$ be the set of non-negative integers.
We say that $C$ is an EI category of type $\Ai$ if
$\Ob(C) = \Z_+$ and the quiver underlying $C$ is:
\begin{equation*}
0 \longrightarrow 1 \longrightarrow 2 \longrightarrow \cdots.
\end{equation*}
\end{definition}

\subsection{Finiteness conditions}

Recall that
the EI category $C$ is finite (resp. infinite) if the set of morphisms of $C$ 
is finite (resp. infinite).

\begin{definition}
We say that $C$ is \emph{locally finite} if $C(i,j)$ is a finite set for all
$i,j\in\Ob(C)$. 
\end{definition}

There is a partial order $\le $ on $\Ob(C)$ defined by $i\le j $ if $C(i,j)$ is nonempty.

\begin{definition}
We say that $C$ is \emph{strongly locally finite} if it is 
locally finite and for every $i,j \in \Ob(C)$ with $i\le j$, there are 
only finitely many $l \in\Ob(C)$ such that $i\le l \le j$.
\end{definition}

Observe that $C$ is strongly locally finite if and only if for every
$i,j\in \Ob(C)$ with $i\le j$, the full subcategory of $C$ generated by
all objects $l$ satisfying $i\le l\le j$ is a finite EI category.
Any locally finite EI category of type $\Ai$ is strongly locally finite.

\begin{remark} \label{factorization}
If $C$ is strongly locally finite, then any morphism $\alpha$
of $C$ which is not an isomorphism can be written as 
$\alpha = \beta_1 \cdots \beta_r$  where $\beta_1, \ldots, \beta_r$ 
are unfactorizable morphisms; 
moreover, in this case, 
for any $i,j\in \Ob(C)$, one has $i\le j$ if and only if there exists a 
directed path in $Q$ from $i$ to $j$.
\end{remark}

\subsection{Modules}
Let $\kk$ be a commutative ring. 

For any set $X$, we shall write
$\kk X$ for the free $\kk$-module with basis $X$.
If $G$ is a group, then
$\kk G$ is the group algebra of $G$ over $\kk$.

\begin{definition}
The \emph{category algebra} of $C$ over $\kk$ is 
the $\kk$-algebra $C(\kk)$ defined by
\begin{equation*}
C(\kk)  = \bigoplus_{i,j\in\Ob(C)} \kk C(i,j). 
\end{equation*}
If $\alpha\in C(i,j)$ and $\beta\in C(l,m)$, their product in $C(\kk)$ 
is defined to be the composition $\alpha\beta$ if $i=m$; it is defined
to be 0 if $i\neq m$.
\end{definition}

For any $i\in \Ob(C)$, we denote by $G_i$ the group $C(i,i)$,
and write $\mathbf{1}_i$ for the identity element of the group $G_i$.

\begin{definition}
A $C(\kk)$-module $V$ is called a \emph{$\kk C$-module} if it is
\emph{graded}, i.e. if $V$ is equal 
as a $\kk$-module to the direct sum
\begin{equation*}
V = \bigoplus_{i\in \Ob(C)} V_i, 
\end{equation*}
where $V_i = \mathbf{1}_i V$ for all $i\in \Ob(C)$.
\end{definition}

Equivalently, one can define a $\kk C$-module to be a 
covariant functor from $C$
to the category of $\kk$-modules. 
We shall write $\Mod_\kk(C)$ for the 
category of $\kk C$-modules; in particular, for a group $G$, we
write $\Mod_\kk(G)$ for the category of $\kk G$-modules.
The category $\Mod_\kk(C)$ is an abelian category.

\begin{definition}
Let $V$ be a $\kk C$-module. 
An element $v\in V$ is \emph{homogeneous} if there exists $i\in \Ob(C)$
such that $v\in V_i$; we call $i$ the \emph{degree} of $v$, and denote
it by $\deg v$.
\end{definition}

For any $i\in \Ob(C)$, we have the restriction functor
\begin{equation*}
 \Res : \Mod_\kk(C) \longrightarrow \Mod_\kk(G_i), \quad V \mapsto V_i.
\end{equation*}
The restriction functor is exact and it has a left adjoint,
the induction functor
\begin{equation*}
 \Ind : \Mod_\kk(G_i) \longrightarrow \Mod_\kk(C), \quad U \mapsto
 \bigoplus_{j\in\Ob(C)} \kk C(i,j)\otimes_{\kk G_i}  U.
\end{equation*}

\begin{notation}  
For any $i\in \Ob(C)$, let $M(i) = \Ind (\kk G_i)$.
\end{notation}
Thus,
\begin{equation*}
M(i) = \bigoplus_{j\in \Ob(C)} M(i)_j, \quad\mbox{ where }\quad 
M(i)_j = \kk C(i,j). 
\end{equation*}
In particular, note that $M(i) = C(\kk) \mathbf{1}_i$.

\begin{remark}
It is easy to see that $M(i)$ is a projective $\kk C$-module. Indeed, since $\Ind$ has an exact right adjoint functor, it takes projectives to projectives.
\end{remark}

\subsection{Finitely generated modules}

If $V$ is a $\kk C$-module and $s$ is an element of
$V$ homogeneous of degree $i$, we have a homomorphism 
\begin{equation*}
\pi_s : M(i) \longrightarrow V 
\end{equation*}
defined by $\pi_s (\alpha) = \alpha s$ for all $\alpha\in C(i,j)$,
for all $j\in \Ob(C)$. 
If $S$ is a subset of $V$ and all elements of $S$ are homogeneous,
then we have a homomorphism 
\begin{equation} \label{pi_S} 
\pi_S : \bigoplus_{s\in S} M(\deg s) \longrightarrow V 
\end{equation}
whose restriction to the component corresponding to $s$ is $\pi_s$. 
The image of $\pi_S$ is the $\kk C$-submodule of $V$ generated by
$S$.

\begin{notation}  \label{M(S)}
If $V$ is a $\kk C$-module and $S$ is a set of homogeneous elements of $V$, let 
\begin{equation*}
 M(S) = \bigoplus_{s\in S} M(\deg s). 
\end{equation*}
\end{notation}

\begin{definition}
A set $S$ is called a set of \emph{generators} of 
a $\kk C$-module $V$ if $S\subset V$ and the only $\kk C$-submodule 
of $V$ containing $S$ is $V$ itself.
A $\kk C$-module $V$ is \emph{finitely generated} if it has a finite set of
generators.
\end{definition}

A set $S$ of generators of a $\kk C$-module $V$ is said to be a
set of homogeneous generators if all the elements of $S$ are homogeneous. 
Clearly, $V$ is finitely generated if and only if it has a finite set
of homogeneous generators. Hence, we have:

\begin{lemma}  \label{fg}
A $\kk C$-module $V$ is finitely generated if and only if there exists
a finite set $S$ of homogeneous elements of $V$ such that the
homomorphism $\pi_S: M(S) \to V$ of (\ref{pi_S}) is surjective.  
\end{lemma}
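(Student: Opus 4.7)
The proof proposal is that this lemma is a direct combination of two observations already laid out in the preceding paragraphs, so the plan is simply to bundle them together explicitly.

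First, I would dispose of the reduction to homogeneous generators, which is the remark made informally just before the lemma statement. If $V$ admits a finite set of generators $\{v_1,\ldots,v_n\}$, each $v_k$ has a unique decomposition $v_k = \sum_{i} v_{k,i}$ into homogeneous components $v_{k,i}\in V_i$, with only finitely many $v_{k,i}$ nonzero since $V = \bigoplus_{i\in\Ob(C)} V_i$. The (still finite) collection of all nonzero $v_{k,i}$ is a set of homogeneous generators of $V$: any $\kk C$-submodule containing all $v_{k,i}$ contains every $v_k$, hence equals $V$. Conversely, a finite set of homogeneous generators is in particular a finite set of generators. So finite generation is equivalent to the existence of a finite set $S$ of homogeneous elements that generates $V$.

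Second, I would invoke the identification of the image of $\pi_S$ with the submodule generated by $S$, which was stated explicitly in the paragraph introducing the map $\pi_S$ in (\ref{pi_S}). Thus, for any finite set $S$ of homogeneous elements, $\pi_S$ is surjective if and only if $S$ generates $V$ as a $\kk C$-module. Combining this with the previous paragraph gives both directions of the lemma.

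There is no substantive obstacle here; the only thing to be careful about is not conflating ``generates as a $\kk$-module'' with ``generates as a $\kk C$-module,'' and to make sure the decomposition into homogeneous components is genuinely finite, which is immediate from the grading axiom $V = \bigoplus_{i\in\Ob(C)} V_i$. I would therefore write the proof as two short sentences, the first handling the passage from arbitrary generators to homogeneous generators and the second quoting the description of $\operatorname{im}(\pi_S)$.
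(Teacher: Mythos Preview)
Your proposal is correct and matches the paper's approach exactly: the paper likewise reduces to homogeneous generators (stated informally just before the lemma) and invokes the identification of $\operatorname{im}(\pi_S)$ with the $\kk C$-submodule generated by $S$. In fact the paper gives no formal proof at all, simply writing ``Hence, we have:'' after those two observations, so your write-up is already more detailed than the original.
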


Note that if $C$ is locally finite and $V$ is a finitely generated
$\kk C$-module, then $V_i$ is finite dimensional for all $i\in \Ob(C)$.

\begin{definition}
A $\kk C$-module $V$ is \emph{Noetherian} if every $\kk C$-submodule of
$V$ is finitely generated.
\end{definition}

Equivalently, a $\kk C$-module is Noetherian if it satisfies the ascending chain condition on its $\kk C$-submodules.

\section{Noetherian property of finitely generated modules}  \label{statement}

We assume in this section that $C$ is a locally finite EI category of type $\Ai$.

\subsection{Transitivity condition}
We say that $C$ satisfies the \emph{transitivity condition} if
for each $i\in \Z_+$, the action of $G_{i+1}$ on $C(i, i+1)$ is transitive.

\begin{lemma} \label{transitivity}
If $C$ satisfies the transitivity condition, 
then for any $i,j\in \Z_+$ with $i<j$, the $G_j$ action on $C(i,j)$
is transitive.
\end{lemma}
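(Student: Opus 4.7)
The plan is to proceed by induction on $j - i$. The base case $j = i+1$ is precisely the transitivity condition, so nothing needs to be done there. For the inductive step, I would fix $j > i+1$, assume the result for all smaller differences, and pick arbitrary $\alpha, \beta \in C(i,j)$; the goal is to produce $g \in G_j$ with $g \alpha = \beta$.

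The key structural input is that every morphism from $i$ to $j$ can be factored through the intermediate object $j-1$. Since $C$ is locally finite of type $\Ai$, it is strongly locally finite, and Remark \ref{factorization} lets me decompose any non-isomorphism as a product of unfactorizable morphisms. In the type $\Ai$ quiver every arrow goes from some $k$ to $k+1$, so each unfactorizable morphism raises degree by exactly one. Consequently I can write $\alpha = \alpha' \gamma$ and $\beta = \beta' \delta$ with $\gamma, \delta \in C(i, j-1)$ and $\alpha', \beta' \in C(j-1, j)$.

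The induction hypothesis, applied at the difference $j-1-i < j-i$, produces $h \in G_{j-1}$ with $h \gamma = \delta$. Then
\begin{equation*}
\beta = \beta' \delta = \beta' h \gamma = (\beta' h) \gamma,
\end{equation*}
where $\beta' h \in C(j-1, j)$. A single application of the transitivity condition at $i = j-1$ gives some $g \in G_j$ with $g \alpha' = \beta' h$, and then
\begin{equation*}
g \alpha = g \alpha' \gamma = \beta' h \gamma = \beta,
\end{equation*}
completing the induction.

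I do not expect any serious obstacle here; the only point requiring care is justifying the factorization through $j-1$, which is why I would invoke Remark \ref{factorization} and the type $\Ai$ shape of the underlying quiver explicitly rather than take it for granted. Once the factorization is in hand, the argument is a clean two-step: transport $\gamma$ to $\delta$ inside $C(i,j-1)$ using the inductive hypothesis, then transport $\alpha'$ to $\beta' h$ inside $C(j-1, j)$ using the hypothesis at level $j-1$.
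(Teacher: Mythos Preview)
Your proof is correct and follows essentially the same approach as the paper's. The only cosmetic difference is that the paper writes out a full factorization $\alpha=\alpha_{j-i}\cdots\alpha_1$ into one-step morphisms and builds the elements $g_r\in G_{i+r}$ iteratively, whereas you phrase the same idea as a formal induction on $j-i$, peeling off a single step through $j-1$; the content is identical.
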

\begin{proof}
Let $\alpha, \alpha' \in C(i,j)$. 
Since $C$ is of type $\Ai$, there exists two sequences of morphisms 
\begin{equation*}
\alpha_r, \alpha'_r\in C(i+r-1,i+r) \quad\mbox{ for }\quad r=1,\ldots,j-i, 
\end{equation*}
such that $\alpha = \alpha_{j-i}\cdots \alpha_1$ and
$\alpha' = \alpha'_{j-i}\cdots \alpha'_1$.
There exists $g_1\in G_{i+1}$ such that $g_1\alpha_1 = \alpha'_1$.
We find, inductively, an element $g_r\in G_{i+r}$ such that 
$g_r\alpha_r = \alpha'_r g_{r-1}$ for $r= 2, \ldots, j-i$.
Then one has $g_{j-i} \alpha = \alpha'$.
\end{proof}

\subsection{Bijectivity condition}
Suppose that $C$ satisfies the transitivity condition.

\begin{notation}
For each $i\in \Z_+$, we choose and fix a morphism 
\begin{equation*}
\alpha_i\in C(i,i+1) .
\end{equation*}
Moreover, for any $i, j\in \Z_+$ with $i<j$, let
\begin{equation*}
\alpha_{i,j}  = \alpha_{j-1} \cdots \alpha_{i+1} \alpha_i \in C(i,j) . 
\end{equation*}
Let $H_{i,j} = \Stab_{G_j} (\alpha_{i,j})$, and define the map
 \begin{equation} \label{mapm}
 m_{i,j} :  C(i,j) \longrightarrow C(i,j+1) ,\quad  \gamma \mapsto \alpha_j \gamma .
 \end{equation}
\end{notation}

Suppose $h\in H_{i,j}$. 
By the transitivity condition, there exists $g \in G_{j+1}$ such that
$g\alpha_j = \alpha_j h$.  We have
\begin{equation*}
 g\alpha_{i,j+1} = g\alpha_j \alpha_{i,j} = \alpha_j h \alpha_{i,j} = \alpha_j \alpha_{i,j}
=\alpha_{i,j+1},  
\end{equation*}
and hence $g\in H_{i,j+1}$.
Now, for any $\gamma\in C(i,j)$,
\begin{equation*} 
m_{i,j}(h\gamma) = \alpha_j h \gamma 
=  g \alpha_j \gamma
 = g m_{i,j}(\gamma).
\end{equation*}
It follows that $m_{i,j}$ maps each $H_{i,j}$-orbit $\OO$
in $C(i,j)$  into 
a $H_{i,j+1}$-orbit in $C(i,j+1)$;
in particular, we get a map on the set of orbits:
\begin{equation} \label{mapmu}
\mu_{i,j}:  H_{i,j}  \backslash C(i,j)  \longrightarrow  H_{i,j+1} \backslash C(i,j+1), 
\end{equation}
where $\mu_{i,j}(\OO)$ is the $H_{i,j+1}$-orbit that contains $m_{i,j}(\OO)$.

We say that $C$ satisfies the \emph{bijectivity condition} if, for each $i\in \Z_+$,
the map $\mu_{i,j}$ is bijective for all $j$ sufficiently large.
It is clear that this is independent of the choice of the maps $\alpha_i$.

\begin{remark} \label{double coset formulation of bijectivity condition}
For $i<j$, we have a bijection
\begin{equation*}
G_j/ H_{i,j} \longrightarrow C(i,j), \quad g H_{i,j} \mapsto g\alpha_{i,j}.
\end{equation*}
Identifying $C(i,j)$ with $G_j/ H_{i,j}$ via this bijection, the maps (\ref{mapm}) and (\ref{mapmu}) are, respectively, 
\begin{equation*}
m'_{i,j} : G_j/ H_{i,j} \to G_{j+1}/ H_{i, j+1}, \quad gH_{i,j} \mapsto u H_{i,j+1},
\end{equation*}
and
\begin{equation*} 
\mu'_{i,j} : H_{i,j} \backslash G_{i,j} / H_{i,j} \to H_{i, j+1} \backslash G_{i,j+1} / H_{i,j+1}, \quad H_{i,j} g H_{i,j} \mapsto H_{i,j+1} u H_{i,j+1},
\end{equation*}
where $u\in G_{j+1}$ is any element such that $\alpha_j g = u \alpha_j$. The bijectivity condition is equivalent to the condition that, for each $i\in \Z_+$, the map $\mu'_{i,j}$ is bijective for all $j$ sufficiently large. (The set $H\backslash G/ H$ where $H$ is a subgroup of a finite group $G$ appears naturally in the theory of Hecke algebras, see \cite{K}.)
\end{remark}

\begin{lemma} \label{bijective-implies-injective}
Assume that $C$ satisfies the transitivity and bijectivity conditions.
Then for each $i\in \Z_+$,
the map $m_{i,j}$ is injective for all $j$ sufficiently large.
\end{lemma}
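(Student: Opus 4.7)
The plan is to prove the lemma in two stages: first use the bijectivity condition to force fibers of $m_{i,j}$ to lie inside individual $H_{i,j}$-orbits, then argue injectivity within each orbit via an iteration/stabilizer analysis.

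Stage 1 (reduction to individual orbits). Take $j$ past the bijectivity threshold. If $\gamma, \gamma' \in C(i,j)$ satisfy $m_{i,j}(\gamma) = m_{i,j}(\gamma')$, the common image $\alpha_j\gamma = \alpha_j\gamma'$ lies in a single $H_{i,j+1}$-orbit, which by construction of $\mu_{i,j}$ is simultaneously $\mu_{i,j}(\OO_\gamma)$ and $\mu_{i,j}(\OO_{\gamma'})$. The injectivity half of the bijectivity condition then forces $\OO_\gamma = \OO_{\gamma'}$. So every fiber of $m_{i,j}$ lies inside one $H_{i,j}$-orbit, and it suffices to show $m_{i,j}$ is injective on each such orbit.

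Stage 2 (injectivity on each orbit). Fix an orbit $\OO = H_{i,j}\gamma$, set $K := \Stab_{H_{i,j}}(\gamma)$, and define
\[\widetilde{K} := \{h \in H_{i,j} : \alpha_j h\gamma = \alpha_j\gamma\}.\]
A short verification shows $\widetilde{K}$ is a subgroup of $H_{i,j}$ containing $K$, and $m_{i,j}|_\OO$ is injective precisely when $\widetilde{K} = K$; the index $[\widetilde{K}:K]$ measures the common size of the fibers of $m_{i,j}|_\OO$. For each $h \in \widetilde{K}$, the transitivity condition produces a lift $u_h \in H_{i,j+1}$ (well-defined modulo $T_j := \Stab_{G_{j+1}}(\alpha_j)$) with $u_h\alpha_j = \alpha_j h$, and $h \in \widetilde{K}$ is equivalent to $u_h \in \Stab_{H_{i,j+1}}(\alpha_j\gamma)$. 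Iterating this construction up the chain $C(i,j) \to C(i,j+1) \to \cdots$ produces, for every persistent element of $\widetilde{K} \setminus K$, a coherent family of stabilizing elements at each subsequent level.

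The main obstacle, and the step I expect to be technically hardest, is ruling out such a persistent family for $j$ sufficiently large. My approach is to combine local finiteness of $C$ (which bounds the relevant stabilizer subgroups and orbit sizes at each level) with repeated application of the bijectivity condition at levels $j, j+1, \ldots$. A persistent strict inclusion $\widetilde{K}^{(r)} \supsetneq K^{(r)}$ along the iteration would propagate to produce incompatibilities in the orbit sizes under the induced sequence of bijections $\mu_{i, j+r-1} \circ \cdots \circ \mu_{i,j}$; combined with the stabilization of the total orbit count supplied by bijectivity, a descending-chain or pigeonhole argument should then force $[\widetilde{K}:K] = 1$ once $j$ is chosen large enough, completing the proof.
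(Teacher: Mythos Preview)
Your Stage~1 is correct and is exactly what the paper does: once $\mu_{i,j}$ is injective, any two elements with the same image under $m_{i,j}$ must lie in a common $H_{i,j}$-orbit.

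The gap is in Stage~2. You set up the subgroups $K\subset\widetilde K\subset H_{i,j}$ correctly (and $\widetilde K$ really is a subgroup, as one checks by lifting through $u_h$), but the promised ``descending-chain or pigeonhole argument'' is never carried out. There is no obvious monotone quantity: the orbit sizes $|\OO|, |\mu_{i,j}(\OO)|, |\mu_{i,j+1}\mu_{i,j}(\OO)|,\ldots$ can grow without bound, $m_{i,j}(\OO)$ need not be all of $\mu_{i,j}(\OO)$, and the bijectivity condition controls only the \emph{number} of orbits, not their sizes. So the ``incompatibilities in orbit sizes'' you allude to do not materialize, and no finiteness forces $[\widetilde K:K]=1$.

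What you are missing is that Stage~2 is unnecessary once you exploit the full $G_j$-transitivity (Lemma~\ref{transitivity}), not just the $H_{i,j}$-action. Given $m_{i,j}(\gamma_1)=m_{i,j}(\gamma_2)$, write $\gamma_1=g_1\alpha_{i,j}$ and $\gamma_2=g_2\alpha_{i,j}$, lift $g_1$ to $g\in G_{j+1}$ with $g\alpha_j=\alpha_j g_1$, and conjugate to obtain $m_{i,j}(\alpha_{i,j})=m_{i,j}(g_1^{-1}g_2\,\alpha_{i,j})$. Now apply your Stage~1 to \emph{this} pair: both lie in the $H_{i,j}$-orbit of $\alpha_{i,j}$, but that orbit is the singleton $\{\alpha_{i,j}\}$ since $H_{i,j}=\Stab_{G_j}(\alpha_{i,j})$ by definition. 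Hence $g_1^{-1}g_2\,\alpha_{i,j}=\alpha_{i,j}$ and $\gamma_1=\gamma_2$. In your language, for the particular choice $\gamma=\alpha_{i,j}$ one has $K=H_{i,j}$, so $\widetilde K=K$ automatically; the translation by $g_1^{-1}$ lets you reduce every case to this one. No iteration is needed, and the conclusion holds for every $j$ at which $\mu_{i,j}$ is injective.
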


\begin{proof}
Let $j$ be an integer such that $\mu_{i,j}$ is injective.
Suppose $\gamma_1, \gamma_2 \in C(i,j)$ and $m_{i,j}(\gamma_1)=m_{i,j}(\gamma_2)$.
By Lemma \ref{transitivity},
there exists $g_1, g_2\in G_j$ such that $\gamma_1= g_1 \alpha_{i,j}$ and
$\gamma_2 = g_2 \alpha_{i,j}$. 
There also exists $g\in G_{j+1}$ such that $g\alpha_j = \alpha_j g_1$. One has
\begin{equation*}
g \alpha_j \alpha_{i,j} = \alpha_j g_1 \alpha_{i,j} = \alpha_j \gamma_1
= \alpha_j \gamma_2 = \alpha_j g_2 \alpha_{i,j} 
= g \alpha_j g_1^{-1} g_2  \alpha_{i,j} , 
\end{equation*}
and hence 
$m_{i,j}(\alpha_{i,j}) = m_{i,j} (g_1^{-1} g_2 \alpha_{i,j})$.
It follows, by the injectivity of $\mu_{i,j}$, that $\alpha_{i,j}$ and 
$g_1^{-1} g_2 \alpha_{i,j}$ are in the same $H_{i,j}$-orbit.
Thus, there exists $h\in H_{i,j}$ such that
$  h \alpha_{i,j} = g_1^{-1} g_2 \alpha_{i,j}$.
But $h \alpha_{i,j} = \alpha_{i,j}$, so
$\alpha_{i,j} = g_1^{-1} g_2 \alpha_{i,j}$, and hence
$\gamma_1 = g_1\alpha_{i,j} = g_2 \alpha_{i,j} = \gamma_2$.
\end{proof}

\subsection{Main result} 
We shall give the proof of the following theorem in the next section.

\begin{theorem}  \label{maintheorem}
Assume that
$C$ satisfies the transitivity and bijectivity
conditions, and $\kk$ is a field of characteristic 0.
Let $V$ be a finitely generated $\kk C$-module. 
Then $V$ is a Noetherian $\kk C$-module.
\end{theorem}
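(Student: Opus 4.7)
The plan is to first reduce the theorem to showing that the projective modules $M(i)$ are Noetherian for each $i\in\Z_+$. Indeed, by Lemma \ref{fg} every finitely generated $\kk C$-module is a quotient of a finite direct sum $\bigoplus_k M(i_k)$, and the class of Noetherian modules is closed under submodules, quotients, and extensions (hence finite direct sums), so it suffices to prove that every submodule $V\subseteq M(i)$ is finitely generated.

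Fix $i$ and such a $V$. Each piece $V_j\subseteq M(i)_j = \kk C(i,j)$ is a finite-dimensional $\kk G_j$-subrepresentation by local finiteness, and by Lemma \ref{transitivity} we have $M(i)_j \cong \Ind_{H_{i,j}}^{G_j}\kk$ as $\kk G_j$-modules. Because $\kk$ has characteristic zero each $M(i)_j$ is semisimple, and the endomorphism algebra $\End_{\kk G_j}(M(i)_j)$ has dimension $|H_{i,j}\backslash C(i,j)|$, which by the bijectivity condition is constant for $j$ sufficiently large; by Lemma \ref{bijective-implies-injective} the transition map $m_{i,j}$ is injective for all such $j$ as well.

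The central step is to establish the following stabilization claim: for each submodule $V\subseteq M(i)$ there exists $N=N(V)$ such that for every $j\geq N$,
\begin{equation*}
V_{j+1} \;=\; \kk G_{j+1}\cdot m_{i,j}(V_j),
\end{equation*}
that is, $V_{j+1}$ is exactly the $G_{j+1}$-subrepresentation of $M(i)_{j+1}$ generated by the image of $V_j$ under the chosen transition map (equivalently, by $\alpha V_j$ for all $\alpha\in C(j,j+1)$, using transitivity). Granted this, $V$ is generated as a $\kk C$-module by the finite-dimensional subspace $\bigoplus_{j\leq N}V_j$, hence finitely generated. I expect the stabilization claim to be the main obstacle. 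To attack it, I would identify $\kk G_j$-subrepresentations of the induced module $M(i)_j$ with tuples of isotypic components (using char-zero semisimplicity), and translate the bijectivity of $\mu_{i,j}$---which guarantees stability of the double-coset set $H_{i,j}\backslash G_j/H_{i,j}$ controlling $\End_{\kk G_j}(M(i)_j)$---into the statement that no $G_{j+1}$-subrepresentation can appear in $V_{j+1}$ beyond those already forced by $V_j$ via $m_{i,j}$. The semisimplicity afforded by characteristic zero is essential here both for splitting short exact sequences of $G_j$-representations and for passing between isotypic decompositions under the distinct groups $G_j$ and $G_{j+1}$.

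Granted the stabilization, Noetherianity of $M(i)$ follows by the usual ascending chain argument: for any chain $V^{(1)}\subseteq V^{(2)}\subseteq\cdots$ in $M(i)$, set $V=\bigcup_k V^{(k)}$, pick $N=N(V)$, and note that $\bigoplus_{j\leq N}V_j$ is finite-dimensional, so the chain of finite-dimensional spaces $\bigoplus_{j\leq N}V_j^{(k)}$ stabilizes in $k$; by the stabilization claim applied to each $V^{(k)}$ and to $V$, the chain then stabilizes in every degree. This completes the proof.
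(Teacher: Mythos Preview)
Your reduction to the projectives $M(i)$ and the formulation of the stabilization claim are exactly right, and this is precisely what the paper proves (Proposition~\ref{submodule-of-M(i)}). The final paragraph about ascending chains is unnecessary: once every submodule of $M(i)$ is finitely generated, $M(i)$ is Noetherian by definition.

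The genuine gap is that you have not proved the stabilization claim, and your sketch (``identify subrepresentations with tuples of isotypic components \ldots\ translate bijectivity of $\mu_{i,j}$'') does not yet contain the mechanism that makes it work. The difficulty is that $G_j$ and $G_{j+1}$ are different groups, so there is no direct way to compare isotypic decompositions of $M(i)_j$ and $M(i)_{j+1}$; the bijectivity of $\mu_{i,j}$ only tells you the endomorphism algebras have the same dimension, not a priori that subrepresentations match up under $m_{i,j}$.

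The paper's device is to work not with subrepresentations directly but with the spaces $F_j(X)=\Hom_{\kk G_j}(M(i)_j,X_j)\subseteq \End_{\kk G_j}(M(i)_j)$. The bijection $\mu_{i,j}$ of orbit sets induces a linear bijection $\nu_{i,j}$ between the endomorphism algebras, sending the orbit-average endomorphism $f_\OO$ to $f_{\mu_{i,j}(\OO)}$. The key computation (using injectivity of $m_{i,j}$ and averaging by the idempotent $e_{i,j+1}$ of $\kk H_{i,j+1}$) is the formula
\[
\nu_{i,j}(f)(\alpha_{i,j+1}) \;=\; e_{i,j+1}\,\alpha_j\bigl(f(\alpha_{i,j})\bigr),
\]
which shows that if $f(\alpha_{i,j})\in X_j$ then $\nu_{i,j}(f)(\alpha_{i,j+1})\in X_{j+1}$; hence $\nu_{i,j}$ embeds $F_j(X)$ into $F_{j+1}(X)$. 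Since Maschke's theorem gives $F_j(X)\subsetneq F_j(X')$ whenever $X_j\subsetneq X'_j$, a failure of stabilization would produce a strictly increasing chain of subspaces all of dimension at most $\dim\End_{\kk G_N}(M(i)_N)$, a contradiction. This averaging formula is the missing idea in your outline; without it, the bridge from ``same number of double cosets'' to ``subrepresentations propagate'' is not there.
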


Let us give some examples of categories $C$ with $\Ob(C)=\Z_+$,
where the conditions of the theorem are
satisfied. For any  $i\in \Z_+$, we 
shall denote by $[i]$ the set $\{r\in\Z\mid 1\le r\le i\}$;
in particular, $[0]=\emptyset$. 

\begin{example} \label{wreath-product}
Let $\Gamma$ be a finite group.
We define the category $C=\mathrm{FI}_\Gamma$ as follows.
 For any $i, j\in \Z_+$, let
$C(i,j)$ be the set of all pairs $(f, c)$ where
$f: [i]\to [j]$ is an injection, and $c:[i]\to \Gamma$ is an arbitrary map. 
The composition of $(f_1, c_1)\in C(j, l)$ and $(f_2, c_2)\in C(i,j)$ is defined
by 
\begin{equation*}
(f_1, c_1) (f_2, c_2) = (f_3, c_3) 
\end{equation*}
where
\begin{equation*}
f_3(r)=f_1(f_2(r)), \quad c_3(r)=c_1(f_2(r))c_2(r), \quad \mbox{ for all } r\in [i]. 
\end{equation*}
It is easy to see that $C$ is a locally finite EI category 
of type $\Ai$ with isomorphisms
\begin{equation*}
G_i \stackrel{\sim}{\longrightarrow} S_i \ltimes \Gamma^i, \quad
(f,c) \mapsto (f, c(1), \ldots, c(i)) ,
\end{equation*}
where $S_i$ denotes the symmetric group on $[i]$.
We choose $\alpha_i$ to be the pair $(f_i, c_i)$ where $f_i$ is the natural 
inclusion $[i]\hookrightarrow [i+1]$ and $c_i: [i]\to \Gamma$ is the constant
map whose image is the identity element $e$ of $\Gamma$.
Clearly, $C$ satisfies the transitivity condition.
Observe that $H_{i,j}$ is the subgroup of $G_j$ consisting of all pairs $(f,c)$
satisfying $f(r)=r$ and $c(r)=e$ for all $r\in [i]$. 
For any $(f,c) \in C(i,j)$, denote by $f^{-1}[i]$ the set
of $r\in [i]$ such that $f(r)\in [i]$.
Two pairs $(f,c)$, $(g,d)\in C(i,j)$ are in the same $H_{i,j}$-orbit if and only if  
\begin{equation*}
f^{-1}[i]=g^{-1}[i], \quad
f\mid_{f^{-1}[i]}=g\mid_{g^{-1}[i]},  \quad \mbox{ and }\quad
c\mid_{f^{-1}[i]}=d\mid_{g^{-1}[i]} .
\end{equation*}
Let $G'_i$ denote the set of all triples $(U,a,b)$ where $U\subset [i]$,
$a:  U\to [i]$ is an injection, and $b: U\to \Gamma$ is an arbitrary map.
We have an injective map
\begin{equation*}
\theta_{i,j} : H_{i,j}\backslash C(i,j) \longrightarrow G'_i,
\quad H_{i,j} (f,c)\mapsto (f^{-1}[i], f\mid_{f^{-1}[i]}, c\mid_{f^{-1}[i]}) , 
\end{equation*}
When $j\ge 2i$, the map $\theta_{i,j}$ is surjective.
One has $\theta_{i,j+1}  \mu_{i,j} = \theta_{i,j}$. 
Therefore, $\mu_{i,j}$ is bijective when $j\ge 2i$, and hence $C$ 
satisfies the bijectivity condition.
\end{example}

\begin{remark}
When $\Gamma$ is the trivial group, the category FI$_\Gamma$
is equivalent to the category FI; in this case, the observations
in Example \ref{wreath-product} are essentially contained in 
the proof of \cite[Lemma 3.1]{K}. When $\Gamma$ is the cyclic
group of order 2, the category FI$_\Gamma$ is equivalent to the category FI$_{BC}$
in \cite{Wi}. One can similarly show that the category FI$_D$ in \cite{Wi}
satisfies the conditions of Theorem \ref{maintheorem}.
\end{remark}

\begin{example}  \label{GL}
Let $\F_q$ be the finite field with $q$ elements. We define the category $C$
as follows.
For any $i, j \in\Z_+$, let $C(i,j)$ be the set of all injective linear maps 
from $\F_q^i$ to $\F_q^j$. The group $G_i$ is the general linear group $GL_i(\F_q)$.
We choose $\alpha_i$ to be the natural inclusion $\F_q^i\hookrightarrow \F_q^{i+1}$.
Clearly, $C$ is a locally finite EI category of type $\Ai$
satisfying the transitivity  condition.
Let us check the bijectivity condition. 

The map $\alpha_{i,j}$ is the natural inclusion $\F_q^i\hookrightarrow \F_q^j$, and
$H_{i,j}$ is the subgroup of $GL_j(\F_q)$ consisting of all matrices
of form:
\begin{equation*}
\left( \begin{array}{cc} 
I_i & X \\
0 & Y 
\end{array} \right)
\end{equation*}
where $I_i$ denotes the $i$-by-$i$ identity matrix, $X$ is any $i$-by-$(j-i)$ matrix,
and $Y$ is any invertible $(j-i)$-by-$(j-i)$ matrix.
We write the elements of $C(i,j)$ as $j$-by-$i$ matrices. 
For any $A\in C(i,j)$, we denote by $U_A$ the subspace of $\F_q^i$ 
spanned by the last $j-i$ rows of $A$, and 
define $f_A : \F_q^i \to \F_q^i/U_A$ to be the linear map that sends
the $r$-th standard basis vector $e_r$ of $\F_q^i$ to the $r$-th row of $A$
modulo $U_A$, for each $r\in [i]$.
Two elements $A, B \in C(i,j)$ are in the same $H_{i,j}$-orbit if and only if
$U_A=U_B$ and $f_A=f_B$.
Let $G'_i$ be the set of all pairs $(U, f)$ where $U$ is any subspace of $\F_q^i$ 
and $f: \F_q^i \to \F_q^i/U$ is a surjective linear map.
We have an injective map
\begin{equation*}
\theta_{i,j} : H_{i,j}\backslash C(i,j) \longrightarrow G'_i, \quad
H_{i,j} A \mapsto (U_A, f_A). 
\end{equation*}
When $j\ge 2i$, the map $\theta_{i,j}$ is surjective.
One has $\theta_{i,j+1}  \mu_{i,j} = \theta_{i,j}$. 
Therefore, $\mu_{i,j}$ is bijective when $j\ge 2i$.
\end{example}

\begin{remark}
The category $C$ of Example \ref{GL} is equivalent to the
category VI of finite dimensional $\F_q$-vector spaces 
and linear injections. In Example \ref{VIC} below, we consider a variant VIC whose
objects are also the finite dimensional $\F_q$-vector spaces but the morphisms
are pairs $(f,Z)$ where $f$ is an injective linear map and $Z$ is a 
complementary subspace to the image of $f$. The Noetherian property
of VI and VIC over a Noetherian ring $\kk$ are proved by Putman and Sam in \cite{PS};
for VI, it is also proved by Sam and Snowden in \cite{SS2}.
\end{remark}

\begin{example} \label{VIC}
As before, let $\F_q$ be the finite field with $q$ elements. We define the category $C$
as follows.
For any $i, j \in\Z_+$, let $C(i,j)$ be the set of all pairs $(f,Z)$ where $f$ is an
injective linear map from $\F_q^i$ to $\F_q^j$ and $Z\subset \F_q^j$ is a subspace
complementary to the image of $f$. The composition of morphisms is defined by
\begin{equation*}
(f, Z) \circ (f', Z') = (f \circ f', Z + f(Z')).
\end{equation*} 
The group $G_i$ is the general linear group $GL_i(\F_q)$.
Clearly, $C$ is a locally finite EI category of type $\Ai$ satisfying the transitivity condition.
We choose $\alpha_i$ to be the pair $(f_i,Z_i)$ where $f_i$ is the
natural inclusion $\F_q^i\hookrightarrow \F_q^{i+1}$ and $Z_i=\{(0,\ldots,0,z)\mid z\in \F_q\}$,
so $H_{i,j}$ is the subgroup of $GL_j(\F_q)$ consisting of all matrices of the form:
\begin{equation*}
\left( \begin{array}{cc} 
I_i & 0 \\
0 & Y 
\end{array} \right) \quad \mbox{ where } Y\in GL_{j-i}(\F_q).
\end{equation*}
The map 
\begin{equation*}
\mu'_{i,j}: H_{i,j}\backslash GL_j(\F_q)/ H_{i,j} \longrightarrow H_{i,j+1}\backslash GL_{j+1}(\F_q)/ H_{i,j+1}
\end{equation*}
is induced by the standard inclusion of $GL_j(\F_q)$ into $GL_{j+1}(\F_q)$, see Remark \ref{double coset formulation of bijectivity condition}. 
To check the bijectivity condition, it suffices to show that for each $i\in \Z_+$, the maps $\mu'_{i,j}$ are surjective for all $j$ sufficiently large.

Let us show that $\mu'_{i,j}$ is surjective when $j\geqslant 3i$. Let $X\in GL_{j+1}(\F_q)$ where $j\geqslant 3i$.
We need to show that for some $g, g' \in H_{i,j+1}$, the only nonzero entry in the last row or last column of $gXg'$ is the entry in position $(j+1,j+1)$ and it is equal to 1. First, it is easy to see that for suitable choices of $h_1,h_1' \in H_{i,j+1}$, the entries of $h_1 X h_1'$ in positions $(r,s)$ are 0 if $r>2i$ and $s\leqslant i$, or if $r\leqslant i$ and $s>2i$. Indeed, we may first 
perform row operations on the last $j-i$ rows of $X$ to change the entries in  positions $(r,s)$ to 0 for $r>2i$ and $s\leqslant i$, then
perform column operations on the last $j-i$ columns of the resulting matrix to change the entries in positions $(r,s)$ to 0 for $r\leqslant i$ and $s>2i$.
Set $Y=h_1 X h_1'$. Since $Y$ has rank $j+1$, and $j+1>3i$, there exists $r, s>2i$ such that the entry in position $(r,s)$ of $Y$ is nonzero. Swapping row $r$ with row $j+1$, and then swapping column $s$ with column $j+1$, we may assume that the entry of $Y$ in position $(j+1,j+1)$ is nonzero, and by rescaling the entries in the last row we may assume that this entry is 1. It is now easy to see that for some $h_2, h_2'\in H_{i,j}$, the matrix $h_2 Y h_2'$ has the required form.
\end{example}

\section{Proof of main result} \label{proofs}

Recall that $C$ denotes a locally finite EI category of type $\Ai$. 
We assume in this section
that $C$ satisfies the transitivity and bijectivity
conditions, and $\kk$ is a field of characteristic 0.

\subsection{Special case}
Let $i\in \Z_+$. We first prove Theorem \ref{maintheorem} in the
special case when $V$ is $M(i)$. 
By Lemma \ref{bijective-implies-injective}, there exists
$N\in \Z_+$ such that the maps 
$m_{i,j}$ of (\ref{mapm}) are injective and the maps
$\mu_{i,j}$ of (\ref{mapmu}) are bijective for all $j\ge N$. 
We fix such a $N$.

We shall need the following simple observation.

\begin{lemma}
Let $H$ be a finite group and let $\OO_1$ be a set on which $H$ acts transitively.
Let $\OO_2 \subset \OO_1$ be any nonempty subset. Then one has:
\begin{equation*}
 \frac{1}{|H|} \sum_{h\in H} h\left(  \frac{1}{|\OO_2|} \sum_{x \in \OO_2} x \right)
= \frac{1}{|\OO_1|} \sum_{y\in \OO_1} y  
\end{equation*}
in the $\kk H$-module $\kk \OO_1$.
\end{lemma}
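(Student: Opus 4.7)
The plan is to reduce everything to the standard orbit-stabilizer computation for the symmetrizer of a single element.

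First, I would swap the order of summation on the left-hand side, rewriting it as
\begin{equation*}
\frac{1}{|\OO_2|} \sum_{x \in \OO_2} \left( \frac{1}{|H|} \sum_{h \in H} h x \right),
\end{equation*}
so the problem reduces to showing that for every single $x \in \OO_1$ one has
\begin{equation*}
\frac{1}{|H|} \sum_{h \in H} h x \;=\; \frac{1}{|\OO_1|} \sum_{y \in \OO_1} y
\end{equation*}
in $\kk \OO_1$; since the right-hand side of the claimed identity does not involve $\OO_2$, averaging this equality over $x \in \OO_2$ then gives exactly what we want.

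To prove the single-element identity, I would group the sum over $H$ by orbits under the stabilizer. Writing $\Stab_H(x)$ for the stabilizer, the orbit-stabilizer theorem gives $|H x| = [H : \Stab_H(x)]$, and each $y \in Hx$ is of the form $hx$ for exactly $|\Stab_H(x)|$ values of $h \in H$. Therefore
\begin{equation*}
\frac{1}{|H|} \sum_{h \in H} h x \;=\; \frac{|\Stab_H(x)|}{|H|} \sum_{y \in H x} y \;=\; \frac{1}{|H x|} \sum_{y \in Hx} y.
\end{equation*}
Finally, since $H$ acts transitively on $\OO_1$ we have $Hx = \OO_1$, so the right-hand side above equals $\frac{1}{|\OO_1|} \sum_{y \in \OO_1} y$, as required.

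There is no real obstacle in this argument; the only subtlety worth flagging is the implicit use of characteristic $0$ (or at least of $|H|$ being invertible in $\kk$) to make sense of the fractions $1/|H|$, $1/|\OO_1|$, $1/|\OO_2|$. Granted this, the proof is a two-line calculation built on orbit-stabilizer and the transitivity of the $H$-action on $\OO_1$.
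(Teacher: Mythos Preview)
Your proof is correct and follows essentially the same approach as the paper: swap the order of summation, then use the orbit-stabilizer theorem together with transitivity to show that $\frac{1}{|H|}\sum_{h\in H} hx = \frac{1}{|\OO_1|}\sum_{y\in \OO_1} y$ for each individual $x$, and finally average over $\OO_2$. Your remark about needing $|H|$, $|\OO_1|$, $|\OO_2|$ to be invertible in $\kk$ is apt; in the paper this lemma is only invoked under the standing assumption that $\kk$ is a field of characteristic~$0$.
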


\begin{proof}
Let $r$ be the order of $\Stab_H (y)$ for any $y\in \OO_1$. Then one has:
\begin{eqnarray*}
 \frac{1}{|H|} \sum_{h\in H} h\left(  \frac{1}{|\OO_2|} \sum_{x \in \OO_2} x \right)
 &=&  \frac{1}{|\OO_2|} \sum_{x \in \OO_2} \left( \frac{1}{|H|} \sum_{h\in H} hx
 \right)  \\
 &=&  \frac{1}{|\OO_2|} \sum_{x \in \OO_2} \left( \frac{r}{|H|}  \sum_{y\in \OO_1} y \right)  \\
 &=&  \frac{1}{|\OO_1|} \sum_{y\in \OO_1} y .
\end{eqnarray*}
\end{proof}

Now suppose $j\ge N$.
For each $H_{i,j}$-orbit $\OO$ in $C(i,j)$, 
we define a $\kk G_j$-module endomorphism
\begin{equation*}
f_\OO : M(i)_j \to M(i)_j 
\end{equation*}
by
\begin{equation*}
f_\OO(g \alpha_{i,j})   =  \frac{1}{|\OO|} \sum_{\gamma\in \OO}  g \gamma,
\quad \mbox{ for any } g\in G_j.  
\end{equation*}
The elements $f_\OO$ for $\OO\in H_{i,j}\backslash C(i,j)$ form a basis for 
$\End_{\kk G_j}(M(i)_j)$. Hence, 
from the bijectivity of $\mu_{i,j}$, we have a linear bijection
\begin{equation*}
 \nu_{i,j}:
 \End_{\kk G_j}(M(i)_j) \to \End_{\kk G_{j+1}}(M(i)_{j+1}) , \quad  
 f_\OO \mapsto f_{\mu_{i,j}(\OO)}. 
\end{equation*}

\begin{notation}
Define $e_{i,j} \in \kk H_{i,j}$ by
\begin{equation*}
e_{i,j} = \frac{1}{|H_{i,j}|}  \sum_{h\in H_{i,j}} h . 
\end{equation*}
\end{notation}

\begin{lemma}
For any $f\in \End_{\kk G_j} (M(i)_j)$, one has:
\begin{equation} \label{nuf}
\nu_{i,j}(f) (\alpha_{i,j+1}) =
 e_{i,j+1} \alpha_j ( f(\alpha_{i,j})).
\end{equation}
\end{lemma}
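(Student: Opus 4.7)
The plan is to verify the identity on a basis of $\End_{\kk G_j}(M(i)_j)$. Since the elements $f_\OO$, indexed by the $H_{i,j}$-orbits $\OO$ in $C(i,j)$, form such a basis, and both sides of (\ref{nuf}) are linear in $f$, it suffices to check that
\begin{equation*}
f_{\mu_{i,j}(\OO)}(\alpha_{i,j+1}) \;=\; e_{i,j+1}\,\alpha_j\bigl( f_\OO(\alpha_{i,j})\bigr)
\end{equation*}
for every $H_{i,j}$-orbit $\OO \subset C(i,j)$.

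The left-hand side is immediate from the definition of $f_{\mu_{i,j}(\OO)}$, taking $g = \mathbf{1}_{j+1}$:
\begin{equation*}
f_{\mu_{i,j}(\OO)}(\alpha_{i,j+1}) \;=\; \frac{1}{|\mu_{i,j}(\OO)|} \sum_{\delta \in \mu_{i,j}(\OO)} \delta.
\end{equation*}
For the right-hand side, I would first compute $\alpha_j\bigl(f_\OO(\alpha_{i,j})\bigr) = \frac{1}{|\OO|}\sum_{\gamma \in \OO} \alpha_j\gamma = \frac{1}{|\OO|}\sum_{\gamma \in \OO} m_{i,j}(\gamma)$. Because $j \ge N$, the map $m_{i,j}$ is injective by Lemma \ref{bijective-implies-injective}, so $|m_{i,j}(\OO)| = |\OO|$ and this sum equals $\frac{1}{|m_{i,j}(\OO)|} \sum_{x \in m_{i,j}(\OO)} x$.

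The remaining step is to apply the preceding lemma with $H = H_{i,j+1}$, $\OO_1 = \mu_{i,j}(\OO)$, and $\OO_2 = m_{i,j}(\OO)$. The hypotheses hold because $\mu_{i,j}(\OO)$ is, by construction, the unique $H_{i,j+1}$-orbit containing $m_{i,j}(\OO)$, so $\OO_2$ is a nonempty subset of the transitive $H_{i,j+1}$-set $\OO_1$. The lemma then yields
\begin{equation*}
e_{i,j+1} \cdot \frac{1}{|m_{i,j}(\OO)|}\sum_{x \in m_{i,j}(\OO)} x \;=\; \frac{1}{|\mu_{i,j}(\OO)|} \sum_{y \in \mu_{i,j}(\OO)} y,
\end{equation*}
which matches the left-hand side computed above. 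There is no real obstacle here; the proof is essentially a bookkeeping exercise, and the only subtlety is invoking the injectivity of $m_{i,j}$ (to identify $|m_{i,j}(\OO)|$ with $|\OO|$) before applying the averaging lemma.
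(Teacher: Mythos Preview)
Your proof is correct and follows essentially the same approach as the paper: reduce by linearity to the basis elements $f_\OO$, use the injectivity of $m_{i,j}$ (valid since $j\ge N$) to identify $|m_{i,j}(\OO)|$ with $|\OO|$, and then apply the preceding averaging lemma with $H=H_{i,j+1}$, $\OO_1=\mu_{i,j}(\OO)$, $\OO_2=m_{i,j}(\OO)$.
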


\begin{proof}
By linearity, it suffices to verify (\ref{nuf}) when $f=f_\OO$, for each
$\OO \in H_{i,j}\backslash C(i,j)$. 

The injectivity of $m_{i,j}$ implies 
 that $\{\alpha_j \gamma \mid \gamma\in \OO\}$ is a
subset of $\mu_{i,j}(\OO)$ consisting of $|\OO|$ elements.
Thus, by the preceding lemma, one has:
\begin{eqnarray*}
 e_{i,j+1} \alpha_j ( f_\OO(\alpha_{i,j}))  &=&
 \frac{1}{|H_{i,j+1}|} \sum_{h\in H_{i,j+1}} h \left( 
\frac{1}{|\OO|} \sum_{\gamma\in \OO} \alpha_j\gamma
 \right) \\
 &=& \frac{1}{|\mu_{i,j}(\OO)|} \sum_{y\in \mu_{i,j}(\OO)} y \\
 &=&  f_{\mu_{i,j}(\OO)} (\alpha_{i,j+1}) \\
 &=&  \nu_{i,j}(f_\OO) (\alpha_{i,j+1}) .
\end{eqnarray*}
\end{proof}

For any  $\kk G_j$-submodule $U\subset M(i)_j$, we have a natural
inclusion
\begin{equation*}
\Hom_{\kk G_j} (M(i)_j, U) \subset \End_{\kk G_j} (M(i)_j). 
\end{equation*}
By Maschke's theorem,
if $U\subsetneq U'$ are $\kk G_j$-submodules of $M(i)_j$, then
\begin{equation}  \label{Maschke}
\Hom_{\kk G_j}(M(i)_j, U) \subsetneq \Hom_{\kk G_j}(M(i)_j, U') . 
\end{equation}

\begin{lemma}  \label{nufin}
Let $X$ be a $\kk C$-submodule of $M(i)$. 
If $f \in   \Hom_{\kk G_j} (M(i)_j, X_j)$, then 
\begin{equation*}
\nu_{i,j} (f) \in \Hom_{\kk G_{j+1}} (M(i)_{j+1}, X_{j+1}) . 
\end{equation*}
\end{lemma}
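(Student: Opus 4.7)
The plan is to exploit the explicit formula (\ref{nuf}) for $\nu_{i,j}(f)(\alpha_{i,j+1})$ together with the fact that $M(i)_{j+1}$ is cyclic as a $\kk G_{j+1}$-module, generated by $\alpha_{i,j+1}$. By the transitivity condition and Lemma \ref{transitivity}, $G_{j+1}$ acts transitively on $C(i,j+1)$, so every element of $M(i)_{j+1} = \kk C(i,j+1)$ is a $\kk G_{j+1}$-linear combination of translates of $\alpha_{i,j+1}$. Since $\nu_{i,j}(f)$ is by construction a $\kk G_{j+1}$-module homomorphism, its entire image lies in $X_{j+1}$ as soon as $\nu_{i,j}(f)(\alpha_{i,j+1}) \in X_{j+1}$.

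Thus the whole argument reduces to showing this single containment. Here I would simply chase the right-hand side of (\ref{nuf}), namely $e_{i,j+1} \alpha_j (f(\alpha_{i,j}))$, through three trivial closure properties of $X$. First, $f(\alpha_{i,j}) \in X_j$ by the hypothesis that $f$ takes values in $X_j$. Second, $\alpha_j \cdot X_j \subset X_{j+1}$ because $X$ is a $\kk C$-submodule of $M(i)$ and $\alpha_j \in C(j, j+1)$ acts by the category structure. Third, $e_{i,j+1}$ lies in $\kk H_{i,j+1} \subset \kk G_{j+1}$, and $X_{j+1}$ is stable under $\kk G_{j+1}$, so applying $e_{i,j+1}$ preserves membership in $X_{j+1}$.

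Combining these three steps gives $\nu_{i,j}(f)(\alpha_{i,j+1}) \in X_{j+1}$, and then by $\kk G_{j+1}$-equivariance $\nu_{i,j}(f)$ maps all of $M(i)_{j+1}$ into $X_{j+1}$, which is exactly the assertion that $\nu_{i,j}(f) \in \Hom_{\kk G_{j+1}}(M(i)_{j+1}, X_{j+1})$. There is no real obstacle here: the content of the lemma has been entirely packaged into identity (\ref{nuf}), so the remaining work is just to recognize that the right-hand side manifestly lies in $X_{j+1}$. The only thing worth being careful about is the reduction to a single generator, which depends essentially on the transitivity condition; without it, one would need to verify the containment on every $G_{j+1}$-orbit in $C(i,j+1)$ separately.
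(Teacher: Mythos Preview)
Your proof is correct and follows essentially the same approach as the paper: use formula (\ref{nuf}) to see that $\nu_{i,j}(f)(\alpha_{i,j+1})\in X_{j+1}$, then invoke the transitivity condition and $G_{j+1}$-equivariance to conclude. You simply spell out in more detail why the right-hand side of (\ref{nuf}) lands in $X_{j+1}$, but the argument is the same.
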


\begin{proof}
Since  $f \in   \Hom_{\kk G_j} (M(i)_j, X_j)$, one has 
$f(\alpha_{i,j})\in X_j$. It follows by (\ref{nuf}) that
$\nu_{i,j}(f)(\alpha_{i,j+1}) \in X_{j+1}$.
By the transitivity condition, one has
$\nu_{i,j}(f)(\alpha) \in X_{j+1}$ for all $\alpha\in C(i,j+1)$.
\end{proof}

\begin{definition}
For any $\kk C$-submodule $X$ of $M(i)$, let
\begin{equation*}
F_j (X) = \Hom_{\kk G_j} (M(i)_j, X_j). 
\end{equation*}
\end{definition}

By Lemma \ref{nufin}, we have a commuting diagram:
\begin{equation} \label{commutingdiagram}
 \xymatrix{ F_j (X)  \ar[rr]^{\nu_{i,j}}   \ar@{^{(}->}[d] & & 
F_{j+1}(X) \ar[rr]^{\nu_{i,j+1}}     \ar@{^{(}->}[d]   & &
F_{j+2}(X) \ar[rr]^{\nu_{i,j+2}}    \ar@{^{(}->}[d]   & & \cdots \\
F_j(M(i))  \ar[rr]^{\nu_{i,j}}  & &
F_{j+1}(M(i)) \ar[rr]^{\nu_{i,j+1}} & &
F_{j+2}(M(i)) \ar[rr]^{\nu_{i,j+2}} & & \cdots.
}
\end{equation}
The maps in the bottom row of (\ref{commutingdiagram}) are bijective.
Thus, 
\begin{equation} \label{top-row}
 \mbox{\it the maps in the top row of (\ref{commutingdiagram}) are injective. } 
\end{equation}

\begin{proposition}  \label{submodule-of-M(i)}
If $X$ is a $\kk C$-submodule of $M(i)$,
then $X$ is finitely generated.
\end{proposition}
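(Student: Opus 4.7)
The plan is to produce a finite bound $M$ such that $X$ is generated by $\bigoplus_{j \leq M} X_j$. The starting observation is that local finiteness of $C$ makes $M(i)_j = \kk C(i,j)$ finite-dimensional, hence $F_j(M(i)) = \End_{\kk G_j}(M(i)_j)$ is finite-dimensional. The bijectivity of the bottom row of (\ref{commutingdiagram}) then identifies all $F_j(M(i))$ for $j \geq N$ with a single finite-dimensional space $E$; under this identification, and using the injectivity stated in (\ref{top-row}), the subspaces $F_j(X)$ form an ascending chain in $E$, which must stabilize at some $M \geq N$.

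Having fixed such an $M$, let $X'$ denote the $\kk C$-submodule of $X$ generated by the finite-dimensional space $\bigoplus_{j \leq M} X_j$; then $X'$ is finitely generated, and the goal reduces to proving $X' = X$. I would show $X_j = X'_j$ by induction on $j$: the case $j \leq M$ is immediate from the construction of $X'$, and for $j > M$ the inductive hypothesis $X_{j-1} = X'_{j-1}$ gives $F_{j-1}(X) = F_{j-1}(X')$. Since $j - 1 \geq M$, the stabilization ensures that the injection $\nu_{i,j-1} : F_{j-1}(X) \to F_j(X)$ coming from the top row of (\ref{commutingdiagram}) is actually a bijection (source and target correspond to the same subspace of $E$). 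Applying Lemma \ref{nufin} to $X'$ gives $\nu_{i,j-1}(F_{j-1}(X')) \subset F_j(X')$, so
\begin{equation*}
F_j(X) = \nu_{i,j-1}(F_{j-1}(X)) = \nu_{i,j-1}(F_{j-1}(X')) \subset F_j(X') \subset F_j(X),
\end{equation*}
forcing $F_j(X) = F_j(X')$. The contrapositive of (\ref{Maschke}) then yields $X_j = X'_j$, completing the induction.

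The main subtle point is recognizing that the restricted map $\nu_{i,j-1}$ becomes surjective onto $F_j(X)$ once $j-1 \geq M$; this is what enables the inductive step to propagate generation from low degrees to all higher degrees. This surjectivity is not extra work but rather a consequence of the finite-dimensionality of $F_j(M(i))$ combined with the bijectivity of the bottom row of (\ref{commutingdiagram}), which together force the ascending chain $F_j(X) \subset E$ to stabilize. Once that observation is in place, the rest of the argument is a clean diagram chase combining Lemma \ref{nufin} with Maschke's theorem via (\ref{Maschke}).
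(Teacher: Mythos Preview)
Your proof is correct and uses exactly the same ingredients as the paper's proof: the commuting diagram (\ref{commutingdiagram}), the bijectivity of the bottom row, Lemma \ref{nufin}, and Maschke's theorem via (\ref{Maschke}). The only difference is presentational---the paper argues by contradiction (producing an infinite strictly increasing chain in the finite-dimensional space $F_N(M(i))$), whereas you phrase the same stabilization argument directly and then verify $X=X'$ by induction on $j$.
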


\begin{proof}

If $n\in\Z_+$, 
we shall write $X(n)$ for the $\kk C$-submodule of $X$ 
generated by  $\bigoplus_{i\le n} X_i$.
It suffices to prove that
$X$ is equal to $X(n)$ for some $n$. 
Suppose not. This means that for each $n$,
there exists $r_n > n$ such that $X(n)_{r_n} \neq X_{r_n}$, and hence
$X(n)_{r_n} \neq X(r_n)_{r_n}$. Consider the sequence
\begin{equation*} 
n_1=N, \quad n_2 = r_{n_1},\quad  n_3 = r_{n_2} , \quad \ldots . 
\end{equation*}
We have 
\begin{equation*}
N= n_1 < n_2 < n_3 < \cdots ,
\end{equation*}
and
\begin{equation*}
X(n_i)_{n_{i+1}} \subsetneq X(n_{i+1})_{n_{i+1}}\quad \mbox{ for all } i. 
\end{equation*}
By (\ref{Maschke}) and (\ref{top-row}), we have 
\begin{multline*}
  F_{n_2}(X(n_1))  
 \subsetneq F_{n_2}(X(n_2)) 
  \hookrightarrow F_{n_3}(X(n_2))  
 \subsetneq F_{n_3}(X(n_3)) \\
  \hookrightarrow F_{n_4}(X(n_3))  
 \subsetneq F_{n_4}(X(n_4))
 \hookrightarrow \cdots 
\end{multline*}
This is an increasing chain which is strictly increasing at every other step. But 
the dimension of each term in this chain is at most $\dim F_N(M(i))$.
Therefore, we have a contradiction.
\end{proof}

\subsection{Proof of Theorem \ref{maintheorem}}

Let $V$ be a finitely generated $\kk C$-module.
Let $Y$ be a $\kk C$-submodule of $V$.
We shall show that $Y$ is finitely generated.

By Lemma \ref{fg}, there exists a finite set $S$ of homogenous elements of $V$ 
such that the homomorphism $\pi_S: M(S) \to V$ of (\ref{pi_S}) is surjective.
Let $X$ be the preimage $\pi_S^{-1}(Y)$ of $Y$. It suffices to prove that 
any $\kk C$-submodule $X$ of $M(S)$ is 
finitely generated. We shall use induction on $|S|$.
If $|S|=1$, the result follows from Proposition \ref{submodule-of-M(i)}.
Suppose $|S|>1$. We choose any $s\in S$ and let $S'=S-\{s\}$; so 
$M(S)=M(S')\oplus M(\deg s)$.
Let $p_s : M(S) \to M(\deg s)$ be the projection map with kernel $M(S')$.
We have a short exact sequence
\begin{equation*}
 0 \longrightarrow X\cap M(S') \longrightarrow  X 
 \stackrel{p_s}{\longrightarrow} p_s(X) \longrightarrow 0. 
\end{equation*}
Since $X\cap M(S')\subset M(S')$ and $p_s(X)\subset M(\deg s)$, it follows 
by induction hypothesis that $X\cap M(S')$ and $p_s(X)$ are both finitely generated.
Hence, $X$ is finitely generated.
This concludes the proof of Theorem \ref{maintheorem}.     

\begin{remark}
After this paper was written, Andrew Putman informed us that he had also found a 
similar proof for the categories FI and VI.
\end{remark}

\section{Further remarks} \label{further-remarks}

In this section, we discuss generalizations of
the injectivity and surjectivity properties
of finitely generated FI-modules; see \cite[Definition 3.3.2]{CEF}.
Let $C$ be a locally finite EI category of type $\Ai$,
and $\kk$ be any commutative ring.

\begin{proposition}  \label{injective}
Assume that $C$ satisfies the transitivity condition. 
Let $V$ be a Noetherian $\kk C$-module. Then the map
$\alpha_j : V_j \to V_{j+1}$ is injective for all $j$ sufficiently large.
\end{proposition}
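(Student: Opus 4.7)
The plan is to build a $\kk C$-submodule $W \subseteq V$ capturing the ``$\alpha_j$-torsion'' at each level, and then use the Noetherian hypothesis to bound its support.

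First I would define $W_j := \ker(V_j \to V_{j+1}, \; v \mapsto \alpha_j v)$ and set $W = \bigoplus_{j \in \Z_+} W_j$. The central claim will be that $W$ is a $\kk C$-submodule and that every element of $W$ is in fact annihilated by every non-isomorphism out of its degree. For $h \in G_j$ and $v \in W_j$, the transitivity condition produces $g \in G_{j+1}$ with $\alpha_j h = g \alpha_j$, so $\alpha_j(hv) = g \alpha_j v = 0$; hence each $W_j$ is $G_j$-stable. For $\beta \in C(j,k)$ with $k > j$, which is automatically not an isomorphism, I would invoke the fact that $C$ is strongly locally finite (as any locally finite type-$\Ai$ category is) to factor $\beta = \beta_r \cdots \beta_1$ into unfactorizable morphisms; necessarily $\beta_1 \in C(j,j+1)$ since the underlying quiver is $0 \to 1 \to 2 \to \cdots$. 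Transitivity then writes $\beta_1 = g \alpha_j$ for some $g \in G_{j+1}$, giving $\beta v = \beta_r \cdots \beta_2 g \alpha_j v = 0$. In particular $\beta v \in W_k$, so $W$ is a $\kk C$-submodule of $V$.

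Next, since $V$ is Noetherian, $W$ is finitely generated, so I may pick a finite set of homogeneous generators with maximum degree $N$. For $j > N$, every element of $W_j$ is a $\kk$-linear combination of elements of the form $\gamma s$ with $s$ a chosen generator and $\gamma \in C(\deg s, j)$; since $\deg s \le N < j$, the morphism $\gamma$ is not an isomorphism, and the preceding step forces $\gamma s = 0$. Therefore $W_j = 0$ for all $j > N$, which is exactly the desired injectivity of $\alpha_j$ on $V_j$ for $j$ sufficiently large.

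The only nontrivial point is verifying that $W$ is preserved by the $C$-action; once this is in hand, everything else is formal. This step leans on both the $\Ai$-shape of $C$ (so that every non-isomorphism factors through a morphism into the next level) and the transitivity condition (so that every morphism in $C(j,j+1)$ differs from $\alpha_j$ only by a $G_{j+1}$-action). Notably the argument uses neither the bijectivity condition nor the characteristic-zero hypothesis, which matches the generality in which the proposition is stated.
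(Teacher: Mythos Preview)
Your proof is correct and matches the paper's approach: define the torsion submodule $W = \bigoplus_j \ker(\alpha_j)$, use transitivity to check that each $W_j$ is $G_j$-stable and annihilated by every non-isomorphism out of degree $j$ (so $W$ is a $\kk C$-submodule), then invoke finite generation to force $W_j = 0$ for large $j$. The paper's final step is phrased slightly differently---it observes that each $W_j$ is itself a $\kk C$-submodule, so a finite homogeneous generating set for $W$ can meet only finitely many $W_j$---but this is exactly the same observation you spell out.
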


\begin{proof}
Let $X_j$ be the kernel of the map $\alpha_j : V_j \to V_{j+1}$.
Suppose $g\in G_j$.
By the transitivity condition, there exists 
$g_1 \in G_{j+1}$ such that 
$g_1 \alpha_j = \alpha_j g$.
If $x\in X_j$, then one has $\alpha_j(gx) = g_1\alpha_j(x) = 0$.
Hence, $X_j$ is a $\kk G_j$-submodule of $V_j$.
For any $\beta\in C(j,j+1)$, there exists $g_2\in G_j$ such that
$g_2\alpha_j = \beta$; hence, for any $x\in X_j$, one has
$\beta(x) = g_2\alpha_j(x) = 0$. It follows that $X_j$ is 
a $\kk C$-submodule of $V$. Let
\begin{equation*}
X = \bigoplus_{j \in \Z_+} X_j. 
\end{equation*}
Then $X$ is a $\kk C$-submodule of $V$  (called the \emph{torsion submodule} of $V$, see \cite{CEFN}).
By hypothesis, $X$ is finitely generated. Since each $X_j$ is a 
$\kk C$-submodule of $X$, it follows that $X_j$ must be zero for
all $j$ sufficiently large.
\end{proof}

For any $\kk C$-module $V$, we shall denote by $\rho_j(V)$ the image
of the $\kk G_{j+1}$-module map
\begin{equation*}
\kk C(j,j+1) \otimes_{\kk G_j} V_j 
\longrightarrow V_{j+1},  \quad
\alpha\otimes v \mapsto \alpha v . 
\end{equation*}

\begin{proposition}
Assume that $C$ satisfies the transitivity condition. 
Let $V$ be a $\kk C$-module. Suppose $V_j$ is 
a finitely generated $\kk$-module for all $j\in\Z_+$.
Then $V$ is finitely generated as a $\kk C$-module
if and only if $\rho_j(V)=V_{j+1}$
for all $j$ sufficiently large.
\end{proposition}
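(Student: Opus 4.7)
The plan is to prove the two implications separately. The only structural fact I need is that, since $C$ is locally finite of type $\Ai$, it is strongly locally finite; by Remark \ref{factorization}, every non-isomorphism in $C$ factors as a product of unfactorizable morphisms, each of which is an arrow $k \to k+1$ in the underlying quiver. Consequently, for any $i \le j$, every $\alpha \in C(i,j+1)$ can be written as $\alpha = \beta\gamma$ with $\gamma \in C(i,j)$ and $\beta \in C(j,j+1)$. The transitivity condition will play no role in what follows; only the type $\Ai$ structure and the standing hypothesis that each $V_j$ is a finitely generated $\kk$-module are used.

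For the forward direction, assume $V$ is finitely generated, and choose a finite set $S$ of homogeneous generators. Let $N = \max\{\deg s : s \in S\}$. For any $j \ge N$, every element of $V_{j+1}$ is a $\kk$-linear combination of elements of the form $\alpha s$ with $s \in S$ and $\alpha \in C(\deg s, j+1)$. Factoring $\alpha = \beta\gamma$ as above, we get $\alpha s = \beta(\gamma s)$ with $\gamma s \in V_j$ and $\beta \in C(j,j+1)$, so $\alpha s \in \rho_j(V)$. Thus $V_{j+1} = \rho_j(V)$ for all $j \ge N$.

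For the backward direction, assume $\rho_j(V) = V_{j+1}$ for all $j \ge N$. Using the hypothesis that each $V_j$ is a finitely generated $\kk$-module, pick a finite $\kk$-generating set $S_j$ of $V_j$ for each $j = 0, 1, \ldots, N$, and set $S = S_0 \cup \cdots \cup S_N$. Let $W$ be the $\kk C$-submodule of $V$ generated by the finite set $S$ of homogeneous elements. By construction, $W_j = V_j$ for all $j \le N$. Arguing by induction on $j \ge N$: if $W_j = V_j$, then $V_{j+1} = \rho_j(V)$ is spanned over $\kk$ by elements $\alpha v$ with $\alpha \in C(j,j+1)$ and $v \in V_j = W_j$, and each such $\alpha v$ lies in $W_{j+1}$. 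Hence $W_{j+1} = V_{j+1}$, and by induction $W = V$, so $V$ is finitely generated.

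There is no real obstacle: both directions are short and rely only on the factorization property of type $\Ai$ categories together with the $\kk$-finite-generation hypothesis, which is precisely what is needed to convert an eventually stable growth pattern (the condition $\rho_j(V) = V_{j+1}$) into a finite set of generators drawn from finitely many low-degree pieces.
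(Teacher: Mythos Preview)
Your proof is correct and follows essentially the same approach as the paper's: the paper also fixes a finite homogeneous generating set and uses the factorization $C(i,j+1)=C(j,j+1)\circ C(i,j)$ (phrased there as $\rho_j(M(S))=M(S)_{j+1}$) for the forward direction, and the same induction on $j$ after choosing finite $\kk$-generating sets of $V_0,\ldots,V_N$ for the converse. Your side observation that the transitivity hypothesis plays no role is also correct---the paper's proof uses only the type~$\Ai$ factorization, not transitivity.
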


\begin{proof}
Suppose that $V$ is finitely generated.
By Lemma \ref{fg}, there exists a finite set $S$ of homogeneous elements of $V$
such that the
homomorphism $\pi_S:M(S)\to V$ of (\ref{pi_S}) is surjective.  
Let $n$ be the maximal of $\deg s$ for $s\in S$. 
Suppose $j\ge n$. Then one has 
\begin{equation*}
\rho_j(V) = \pi_S(\rho_j(M(S))) = \pi_S(M(S)_{j+1})= V_{j+1}. 
\end{equation*}

Conversely, suppose there exists $n\in\Z_+$ such
that $\rho_j(V)=V_{j+1}$ for all $j\ge n$.
For each $i\le n$, let $B_i$ be a finite subset of $V_i$ which 
generates $V_i$ as a $\kk$-module.
Let $S=B_0 \cup \cdots \cup B_n$. Then $S$ is a finite set of homogeneous elements
of $V$. 
Let $V'$ be the $\kk C$-submodule of $V$ generated by $S$. 
Clearly, $V'_i = V_i$ for all $i\le n$. 
Suppose, for induction, that $V'_j =V_j$ for some $j\ge n$. Then one has
\begin{equation*}
V'_{j+1} \supset  \rho_j (V') = \rho_j (V) = V_{j+1}.
\end{equation*}
Hence, $V'_{j+1} = V_{j+1}$. It follows that $V'=V$, so $V$ is finitely generated.
\end{proof}

\end{document}